\newcommand{\bb}[1]{\mathbb{#1}}								
\newcommand{\ii}{\textup{i}}									
\newcommand{\mc}{\mathcal}
\newcommand{\sr}[1]{\rho\left(#1\right)}							
\newcommand{\vol}[1]{\operatorname{\rm Vol}\left( #1 \right)}				
\newcommand{\diam}{\text{diam}}
\newtheorem{theorem}{Theorem}
\newtheorem{lemma}[theorem]{Lemma}
\newtheorem{cor}[theorem]{Corollary}
\theoremstyle{definition}
\newtheorem{definition}[theorem]{Definition}
\newtheorem{rem}[theorem]{Remark}
\newtheorem{prob}[theorem]{Problem}
\title{The volume of the trace-nonnegative polytope via the Irwin-Hall Distribution}
\author{
Pietro Paparella\thanks{Division of Engineering and Mathematics, University of Washington Bothell, Bothell, Washington 98011, USA (pietrop@uw.edu)}
\and
Gregory K.~Taylor\thanks{College of William and Mary, Williamsburg VA 23186, USA (gktaylor@email.wm.edu)}
}
\begin{document}
\maketitle

\begin{abstract}
In this work, we find an explicit expression for the volume of the \emph{trace nonnegative polytope} via a generalization of the \emph{Irwin-Hall distribution}. This volume is an upper bound for the volume of all projected, normalized realizable spectra. We provide ancillary results on realizable trace-zero spectra and pose several problems suitable for further inquiry. 
\end{abstract} 

\section{Introduction and Background}

The \emph{real nonnegative inverse eigenvalue problem} (RNIEP) is to find necessary and sufficient conditions on $\sigma = \{ \lambda_1, \dots, \lambda_k \} \subset \bb{R}$ so that $\sigma$ is the spectrum of an entrywise-nonnegative matrix. If $A$ is a nonnegative matrix with spectrum $\sigma$, then $\sigma$ is called \emph{realizable} and $A$ is called a \emph{realizing matrix} for $\sigma$. Despite many stringent necessary conditions, the RNIEP remains unsolved when $k > 4$ (for background, see, e.g., \cite{eln2004}; for recent developments, see \cite{jp2015}). 

The set $\sigma = \{ \lambda_1,\dots,\lambda_k \} \subset \bb{R}$ is said to be \emph{normalized} if 
\[ \lambda_1 = 1 \geq \dots \geq \lambda_k. \] 
For a normalized set $\sigma$, let $x = x_\sigma = \begin{bmatrix} \lambda_2 & \dots & \lambda_k \end{bmatrix}^\top \in \bb{R}^{k-1}$. If $\mathcal{P}^{k-1}$ denotes the set of all projected $k$-tuples of all normalized spectra of nonnegative matrices, then  
\[ \mathcal{P}^{k-1} \subseteq \mathcal{T}^{k-1} := \left\{ x \in \bb{R}^{k-1}: ||x||_\infty \leq 1 \mbox{ and } 1+ \sum_{i=1}^{k-1} x_i \geq 0 \right\}. \] 
This follows from the Perron-Frobenius theorem and the fact that the realizing matrix is trace-nonnegative. The region $\mathcal{T}^{k-1}$, $k \geq 2$, is known as the \emph{trace nonnegative polytope} \cite{km2001}. It is well-known (see, e.g., \cite{jp2015, ll1978-79}) that $\mathcal{P}^{k-1} = \mathcal{T}^{k-1}$ for $2 \leq k \leq 4$.

The purpose of this work is to find an explicit expression for the volume of $\mathcal{T}^n$ ($n \geq 1$). The motivation is three-fold. First, it is clear that this is not a trivial endeavor: one approach is to enumerate the vertices of the polytope and slice it into simplices, at which point the formula for the volume of a simplex can be applied. However, enumerating these vertices is difficult (see, for e.g., \cite{mn2000}). Second, the volume of $\mathcal{T}^n$ gives an upper bound for the volume of $\mathcal{P}^{n}$ ($n \geq 1$). Lastly, in \cite{jp2015}, Johnson and Paparella studied polytopes whose points correspond to projected normalized spectra. In some cases the volume of these polytopes is available; thus, knowing the volume of $\mathcal{T}^n$ gives us a better impression of how ``big" these polytopes are.

In addition, we introduce a generalization of the \emph{Irwin-Hall distribution} which we call the \emph{$[a,b]$-uniform-sum distribution}, which, to the best of our knowledge, is not currently available in the literature (cf. \cite{bg2002}). We demonstrate that $\mathcal{P}^n \subset \mathcal{T}^n$, for every $n \geq 4$ (i.e., for spectra that contain at least five elements), and provide ancillary results on realizable trace-zero spectra. Finally, we pose the problem of finding an open set in the trace nonnegative polytope containing only non-realizable spectra. The discovery of such an open set would imply that the upper bound for the volume of the realizable region is indeed strict. Such an open set does not exist for $n \leq 4$ since the trace nonnegative polytope exactly coincides with the realizable region.

\section{A Generalization of the Irwin-Hall Distribution}

For $n \in \bb{N}$, let $\mathcal{B}^n := \left\{ x \in \bb{R}^n: ||x||_\infty \leq 1 \right\}$. For $i \in \{1,\dots, n\}$, let $Y_i \sim U[-1,1]$, and let $Y = \sum Y_i$. The fraction of the volume of $\mathcal{B}^n$ that coincides with $\mathcal{T}^n$ equals $P(Y \geq -1)$, i.e., 
\begin{equation*}
\vol{\mathcal{T}^n} = P(Y \geq -1) \vol{\mathcal{B}^n} = P(Y \geq -1)2^n. 
\end{equation*} 

For $i \in \{1,\dots, n\}$, let $X_i \sim U[0,1]$, and let $X = \sum X_i$. The continuous probability distribution for the random variable $X$ is the well-known \emph{Irwin-Hall} (or \emph{uniform-sum}) \emph{distribution} (IHD). The probability density function (PDF) $f$ of the IHD is given by
\begin{equation*}
f(x) = \frac{1}{(n-1)!} \sum_{k=0}^{\lfloor x \rfloor} (-1)^k {n \choose k} (x-k)^{n-1},
\end{equation*}
and the cumulative distribution function (CDF) $F$ is given by
\begin{equation*}
F(x) = \frac{1}{n!} \sum_{k=0}^{\lfloor x \rfloor} (-1)^k {n \choose k} (x-k)^n.
\end{equation*}
We refer to $X$ as the \emph{Irwin-Hall random variable}.  

To compute $ P(Y \geq -1)$, we need to generalize the IHD to capture the behavior of the sum of $n$ random variables uniformly distributed on the interval $[a,b]$. The generalization amounts to an affine transformation of an Irwin-Hall random variable. The theory of transformations on random variables is well-established (see, e.g., \cite{leemis2011,wackerly2007mathematical}). 

If $U$ and $V$ are random variables where $U = h(V)$ for some differentiable function $h$, then
$$f_U(u) = f_V[h^{-1}(u)] \left| \frac{d[h^{-1}(u)]}{du} \right|.$$
Integrating the pdf gives the following formula for the cdf
$$F_U(u) = F_V[h^{-1}(u)] .$$

For $i \in \{1,\dots, n\}$, let $Y_i \sim U[-1,1]$, and let $Y = \sum Y_i$. Let $X$ be as above. Note that $Y$ is an affine transformation of $X$ since
\begin{equation*}
Y = h(X) = |b-a|X + na.
\end{equation*}
This is clear when considering the support of each random variable: the support of $X$, denoted by $\mc{S}_X$, is the interval $[0,n]$; and the support of $Y$, denoted $\mc{S}_Y$, is the interval $[na, nb]$. From this, we see that $\diam(\mc{S}_Y) = n|b-a| = \diam(\mc{S}_X)|b-a|$. Also, the leftmost side of $\mc{S}_Y$ lies $na$ units from $0$. Both $X$ and $Y$ are identically distributed within their respective intervals. 

We can apply the general formula for the transformation of a random variable to derive the PDF and CDF of $Y$ in terms of the PDF and CDF of $X$:

\begin{equation}
\label{IHtransPDF}
f_Y(y) = \frac{1}{|b-a|} f_X\left(\frac{y-na}{|b-a|}\right)
\end{equation}
\begin{equation}
\label{IHtransCDF}
F_Y(y) =  F_X \left( \frac{y-na}{|b-a|} \right)
\end{equation}

These calculations lead to a generalization of the IHD, which we refer to as the \emph{$[a,b]$-uniform-sum distribution}.

\begin{theorem}
\label{abDef}
If $X_i \sim U[a,b]$, for $i =1,\dots,n$, and $X = \sum X_i$, then the probability density function and cumulative distribution function of $X$ are given by
\begin{equation}\label{abPDF}
f_X(x) = \frac{1}{|b-a|(n-1)!}\sum_{k=0}^{\lfloor h^{-1}(x) \rfloor} (-1)^k {n \choose k} \left[ h^{-1}(x)-k \right]^{n-1}
\end{equation}
and
\begin{equation}
F_X(x) = \frac{1}{n!} \sum_{k=0}^{\lfloor h^{-1}(x) \rfloor} (-1)^k {n \choose k} \left[ h^{-1}(x)-k \right]^n, 
\end{equation}\label{abCDF}
respectively, where $h^{-1}(x) = \frac{x-na}{|b-a|}$.
\end{theorem}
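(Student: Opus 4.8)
The plan is to obtain the formulas in Theorem~\ref{abDef} directly from the transformation relations \eqref{IHtransPDF} and \eqref{IHtransCDF}, which have already been established, together with the classical Irwin--Hall PDF and CDF for the random variable $X$ with $X_i \sim U[0,1]$ displayed above. The only genuinely new content is recognizing that the sum $X = \sum X_i$ with $X_i \sim U[a,b]$ is distributionally identical to $h(Z)$, where $Z$ is the standard Irwin--Hall random variable on $[0,n]$ and $h(z) = |b-a|z + na$; this is the affine change of variables discussed in the paragraph preceding the theorem. (I note in passing a slight notational collision: the theorem reuses the name $X$ for the $[a,b]$-sum, whereas the transformation formulas were written with $X$ standing for the $[0,1]$-sum; in the write-up I would rename the standard Irwin--Hall variable $Z$ to keep things clean.)

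First I would justify that $X \stackrel{d}{=} h(Z)$: if $X_i \sim U[a,b]$, then $(X_i - a)/(b-a) \sim U[0,1]$, so writing $Z_i := (X_i - a)/|b-a|$ gives $X_i = |b-a| Z_i + a$, hence $X = \sum X_i = |b-a|\sum Z_i + na = |b-a|Z + na = h(Z)$, and $Z = \sum Z_i$ is exactly the Irwin--Hall random variable of parameter $n$. Next I would apply \eqref{IHtransPDF}: $f_X(x) = \frac{1}{|b-a|} f_Z\!\left(h^{-1}(x)\right)$ with $h^{-1}(x) = \frac{x - na}{|b-a|}$. Substituting the closed form for $f_Z$ (the Irwin--Hall PDF with $x$ replaced by $h^{-1}(x)$) yields \eqref{abPDF} after absorbing the extra $1/|b-a|$ factor. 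The CDF computation is the same but easier, since the Jacobian does not appear: \eqref{IHtransCDF} gives $F_X(x) = F_Z(h^{-1}(x))$, and substituting the Irwin--Hall CDF gives \eqref{abCDF}.

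There is essentially no hard step here; the result is a corollary of the transformation lemma already quoted. The one point requiring a sentence of care is the absolute value $|b-a|$ versus $b-a$ and the monotonicity of $h$: since we are dealing with uniform distributions on $[a,b]$ we may assume $a < b$, so $h$ is strictly increasing, $h^{-1}$ is well defined and differentiable with $\frac{d}{du} h^{-1}(u) = 1/|b-a| > 0$, and the absolute value in the transformation formula is harmless. One should also check that $h^{-1}$ maps the support $[na, nb]$ of $X$ onto the support $[0,n]$ of $Z$, so that $\lfloor h^{-1}(x) \rfloor$ ranges over the same index set as $\lfloor z \rfloor$ does in the classical formulas, making the substitution termwise valid. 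With these observations in place the two displayed identities follow immediately.

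\begin{proof}[Proof sketch]
Assume without loss of generality that $a < b$. For $i = 1,\dots,n$ put $Z_i := (X_i - a)/|b-a|$; then $Z_i \sim U[0,1]$, and with $Z := \sum_{i=1}^n Z_i$ we have that $Z$ is the Irwin--Hall random variable of parameter $n$ and $X = \sum_{i=1}^n X_i = |b-a| Z + na = h(Z)$, where $h(z) = |b-a|z + na$. The map $h$ is a strictly increasing affine bijection from $[0,n] = \mc{S}_Z$ onto $[na,nb] = \mc{S}_X$, with inverse $h^{-1}(x) = (x-na)/|b-a|$ and $\frac{d}{dx}h^{-1}(x) = 1/|b-a|$. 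Applying the transformation formulas \eqref{IHtransPDF} and \eqref{IHtransCDF} with $U = X$ and $V = Z$ gives
\[
f_X(x) = \frac{1}{|b-a|}\, f_Z\!\left( h^{-1}(x) \right), \qquad F_X(x) = F_Z\!\left( h^{-1}(x) \right).
\]
Substituting the classical Irwin--Hall PDF and CDF for $f_Z$ and $F_Z$ (evaluated at $h^{-1}(x)$, noting $h^{-1}(x) \in [0,n]$ so the summation index $\lfloor h^{-1}(x)\rfloor$ is the correct one) yields \eqref{abPDF} and \eqref{abCDF}, respectively.
\end{proof}
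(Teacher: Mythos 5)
Your proposal is correct and follows essentially the same route as the paper, which obtains the theorem by viewing the $[a,b]$-sum as the affine image $h(Z)=|b-a|Z+na$ of the standard Irwin--Hall variable and substituting into the transformation formulas \eqref{IHtransPDF} and \eqref{IHtransCDF}. Your added remarks on the notational reuse of $X$ and on the monotonicity of $h$ are reasonable clarifications but do not change the argument.
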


We are now able to give an expression for the volume of $\mathcal{T}^n$.

\begin{cor}
\label{volumecor}
The volume of the $n$-dimensional trace-nonnegative polytope $\mathcal{T}^n$ is given by 
\begin{equation*}
Vol(\mathcal{T}^n) = 2^n \left[1 - \frac{1}{n!}\sum_{k=0}^{\lfloor \frac{n-1}{2} \rfloor} (-1)^k{n \choose k} \left(\frac{n-1}{2}-k \right)^n \right].
\end{equation*}
\end{cor}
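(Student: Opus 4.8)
The plan is to reduce the statement to a direct evaluation of the cumulative distribution function supplied by Theorem~\ref{abDef}. Recall from the discussion preceding Theorem~\ref{abDef} that, taking $Y_i \sim U[-1,1]$ for $i \in \{1,\dots,n\}$ and $Y = \sum_{i=1}^{n} Y_i$, a point chosen uniformly at random from $\mathcal{B}^n$ has independent coordinates distributed as the $Y_i$; hence the probability that such a point also lies in $\mathcal{T}^n$ is exactly $P\!\left(1 + \sum_{i=1}^{n} Y_i \geq 0\right) = P(Y \geq -1)$, and consequently $\vol{\mathcal{T}^n} = P(Y \geq -1)\,\vol{\mathcal{B}^n} = 2^n\, P(Y \geq -1)$. (The constraint $\|x\|_\infty \le 1$ is automatic inside $\mathcal{B}^n$, and the bounding hyperplane $1 + \sum x_i = 0$ has Lebesgue measure zero, so whether the inequality is strict is immaterial.)

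First I would rewrite $P(Y \ge -1) = 1 - F_Y(-1)$, using that $Y$ is a continuous random variable. The variable $Y$ is precisely the $[a,b]$-uniform-sum random variable of Theorem~\ref{abDef} with $a = -1$ and $b = 1$, so that $|b-a| = 2$, $na = -n$, and the substitution variable is $h^{-1}(x) = \frac{x+n}{2}$. Evaluating at $x = -1$ gives $h^{-1}(-1) = \frac{n-1}{2}$, and therefore $\lfloor h^{-1}(-1) \rfloor = \lfloor \frac{n-1}{2} \rfloor$.

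Substituting these values into the CDF formula of Theorem~\ref{abDef} yields
\[
F_Y(-1) = \frac{1}{n!} \sum_{k=0}^{\lfloor \frac{n-1}{2} \rfloor} (-1)^k {n \choose k} \left( \frac{n-1}{2} - k \right)^{n},
\]
and the claimed identity follows at once from $\vol{\mathcal{T}^n} = 2^n \bigl( 1 - F_Y(-1) \bigr)$.

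I do not expect a genuine obstacle here: the corollary is essentially a specialization of Theorem~\ref{abDef}. The only steps worth a sentence of justification are (i) the identification of the volume ratio $\vol{\mathcal{T}^n}/\vol{\mathcal{B}^n}$ with the tail probability $P(Y \ge -1)$, which rests on the product structure of Lebesgue measure on the cube $\mathcal{B}^n$ and was already recorded in the text preceding Theorem~\ref{abDef}, and (ii) the bookkeeping $h^{-1}(-1) = (n-1)/2$ together with the resulting floor in the summation limit. Everything else is arithmetic simplification.
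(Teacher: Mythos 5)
Your proof is correct and follows the same route as the paper: the paper's one-line proof is exactly $\vol{\mathcal{T}^n} = \vol{\mathcal{B}^n}\,P(Y \geq -1) = 2^n(1 - F_Y(-1))$, with $F_Y(-1)$ read off from Theorem~\ref{abDef} at $a=-1$, $b=1$. Your additional bookkeeping ($h^{-1}(-1) = (n-1)/2$ and the resulting floor in the summation limit) is accurate and simply makes explicit what the paper leaves implicit.
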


\begin{proof}
Follows from $Vol(\mathcal{T}^n) = Vol(\mathcal{B}^n) P(X \geq -1) = 2^n(1-F_X(-1))$.
\end{proof}

\section{Non-Realizable Spectra within the Trace Nonnegative Polytope}

In this section, we provide non-realizable spectra within the trace nonnegative polytope for all $n \geq 5$. This generalizes a well-known example of such a spectrum for $n=5$ given by Friedland in \cite{friedland1978inverse}.

If $\sigma = \{ \lambda_1,\dots, \lambda_n \}$ is realizable, then
\begin{equation}
\sr{\sigma} := \max_{i} |\lambda_i| \in \sigma	\label{specrad}
\end{equation}
and 
\begin{equation} 
s_1(\sigma) := \sum_{i=1}^n \lambda_i  \geq 0 	\label{tracecond}. 	
\end{equation}
As mentioned in the introduction, it is well-known that for $1\leq n \leq 4$, conditions \eqref{specrad} and \eqref{tracecond} are also sufficient for realizability (see, e.g., \cite{jp2015, ll1978-79}). 

For $n = 5$, the normalized trace-zero spectrum 
\begin{equation*}
\sigma =  \{ 1, 1, -2/3, -2/3, -2/3 \}
\end{equation*}
is not realizable \cite{friedland1978inverse}. Indeed, if $\sigma$ is realizable, then the realizing matrix must be reducible. Thus, there is a partition $(\sigma_1,\sigma_2)$ of $\sigma$  such that each $\sigma_i$ satisfies \eqref{specrad} and \eqref{tracecond}. Clearly, this is impossible. 

As the next result shows, this construction generalizes to all odd orders greater than or equal to five.

\begin{theorem}
\label{oddNRSpectra}
Let $n = 2k+1$ for some integer $k \geq 2$. If 
\begin{equation*}
\sigma_n := \{ \overbrace{1, \dots, 1}^{k}, \overbrace{-k/(k+1), \dots, -k/(k+1)}^{k+1} \}, 
\end{equation*}
then $s_1(\sigma) = 0$ and $\sigma_n$ is not realizable. 
\end{theorem}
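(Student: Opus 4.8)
The plan is to imitate, for general $n = 2k+1$, the reducibility argument sketched above for the case $n = 5$, carrying out the arithmetic in closed form. First, the trace statement is a one-line computation: $s_1(\sigma_n) = k\cdot 1 + (k+1)\cdot\left(-\tfrac{k}{k+1}\right) = k - k = 0$. It is also worth recording, since this section concerns spectra lying in $\mathcal{T}^{n-1}$, that $\rho(\sigma_n) = 1$ because $0 < \tfrac{k}{k+1} < 1$ and that $\sigma_n$ is normalized, so $x_{\sigma_n} \in \mathcal{T}^{n-1}$; the substance of the theorem is the non-realizability.

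For non-realizability I would argue by contradiction: suppose $A \ge 0$ realizes $\sigma_n$. Every eigenvalue of $\sigma_n$ has modulus at most $1$ and $1 \in \sigma_n$, so $\rho(A) = 1$, and this eigenvalue has multiplicity $k \ge 2$. Since the Perron--Frobenius theorem forces the spectral radius of an irreducible nonnegative matrix to be a simple eigenvalue, $A$ must be reducible. As in the $n = 5$ case, reducibility yields (after a permutation similarity putting $A$ in block upper-triangular form and grouping the diagonal blocks into two) a partition $(\sigma_1, \sigma_2)$ of the multiset $\sigma_n$ into two nonempty parts, each of which is the spectrum of a nonnegative matrix and hence satisfies both \eqref{specrad} and \eqref{tracecond}.

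The heart of the argument is then purely combinatorial. Because $\max_i |\lambda_i|$ over any nonempty sub-multiset of $\sigma_n$ is either $1$ or $\tfrac{k}{k+1}$, and $\tfrac{k}{k+1}$ is positive and therefore not an element of $\sigma_n$, condition \eqref{specrad} forces each of $\sigma_1, \sigma_2$ to contain at least one copy of $1$. Writing $\sigma_1$ as $a$ copies of $1$ together with $b$ copies of $-\tfrac{k}{k+1}$ — so that $\sigma_2$ consists of $k-a$ copies of $1$ and $k+1-b$ copies of $-\tfrac{k}{k+1}$, with $1 \le a \le k-1$ — condition \eqref{tracecond} applied to $\sigma_1$ gives $b \le \tfrac{a(k+1)}{k}$ and applied to $\sigma_2$ gives $b \ge \tfrac{a(k+1)}{k}$. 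Hence $b = \tfrac{a(k+1)}{k} = a + \tfrac{a}{k}$, and since $0 < \tfrac{a}{k} < 1$ for $1 \le a \le k-1$, this value is not an integer — a contradiction. Therefore no such $A$ exists.

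I do not expect a serious obstacle. The only point deserving a careful sentence is the passage from ``$A$ is reducible'' to ``$\sigma_n$ splits into two realizable sub-spectra,'' i.e., that the diagonal blocks of the Frobenius normal form may be collected into two square nonnegative blocks whose spectra partition $\sigma_n$; this is exactly the reduction used (tacitly) in the $n=5$ discussion above, so I would state it with a brief justification and reference, after which the identity $b = a + \tfrac{a}{k}$ closes the proof.
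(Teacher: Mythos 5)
Your proof is correct, but its combinatorial core differs from the paper's. The paper partitions $\sigma_n$ into $k$ realizable parts --- implicitly using that each irreducible diagonal block of the Frobenius normal form has a \emph{simple} Perron root, which must be $1$ (since $k/(k+1)\notin\sigma_n$), so there are exactly $k$ blocks, each containing exactly one copy of $1$ --- and then applies the pigeon-hole principle to the $k+1$ copies of $-k/(k+1)$: some block receives at least two of them and has trace at most $1-2k/(k+1)=(1-k)/(k+1)<0$. You instead coarsen the block decomposition to just two realizable parts and derive the contradiction from an integrality obstruction: both trace inequalities force $b=a(k+1)/k=a+a/k$, which is not an integer for $1\le a\le k-1$. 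Your route buys a little robustness --- you never need to count the irreducible blocks or invoke simplicity of the Perron root within each block, only that a reducible realization splits the spectrum into two nonempty realizable sub-spectra, each of which must contain a $1$ by \eqref{specrad} (or even just by \eqref{tracecond}, since an all-negative part has negative trace). The paper's argument is shorter once the ``exactly $k$ blocks'' fact is granted, but that fact is exactly the step it leaves tacit; your version makes the reduction explicit at the cost of a small computation. Both are complete proofs, and your identification of the two-part reduction as the one step needing careful justification is apt.
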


\begin{proof}
We proceed by contradiction. If $\sigma_n$ is realizable, then the realizing matrix must be reducible. Thus, there exists a partition $(\sigma_1,\dots,\sigma_k)$ of $\sigma_n$ such that each $\sigma_i$ satisfies \eqref{specrad} and \eqref{tracecond}. The impossibility of such a partition is guaranteed by the \emph{pigeon-hole principle} (see, e.g., \cite[Theorem 3.1.1]{b2010}). 
\end{proof}

Now, we establish a similar construction for even orders. Consider the spectrum $\sigma = \{ 1, 1, -1/5, -3/5, -3/5, -3/5 \}$. If $\sigma$ is realizable, then the realizing matrix must be reducible. Thus, there is a partition $(\sigma_1,\sigma_2)$ of $\sigma$ such that $\sigma_i$ satisfies \eqref{specrad} and \eqref{tracecond}. This partition is impossible because $\{1,-3/5, -3/5 \}$ must be a subset of either $\sigma_1$ or $\sigma_2$. 

We can generalize this construction to all even orders greater than or equal to six. 

\begin{theorem}
\label{evenNRSpectra}
Let $n = 2(k+1)$ for some integer $k \geq 2$. If 
\begin{equation*}
\sigma_n := \left\{ \overbrace{1, \dots, 1}^k, \frac{-1}{2k+1}, \overbrace{\frac{1-2k}{2k+1}, \dots, \frac{1-2k}{2k+1}}^{k+1} \right\},
\end{equation*}
then $s_1(\sigma) = 0$ and $\sigma_n$ is not realizable. 
\end{theorem}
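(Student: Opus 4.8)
The plan is to mirror the argument for the $n=5$ case and Theorem \ref{oddNRSpectra}: first verify the trace condition by direct computation, then assume $\sigma_n$ is realizable and derive a contradiction by showing no admissible partition of $\sigma_n$ into the spectra of the (necessarily reducible) diagonal blocks can exist. For the trace, I would simply add: there are $k$ copies of $1$, one copy of $-1/(2k+1)$, and $k+1$ copies of $(1-2k)/(2k+1)$, so $s_1(\sigma_n) = k - \tfrac{1}{2k+1} + \tfrac{(k+1)(1-2k)}{2k+1} = \tfrac{k(2k+1) - 1 + (k+1)(1-2k)}{2k+1}$, and the numerator is $2k^2 + k - 1 + (1 - 2k + k - 2k^2) = 0$.

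Next, suppose $\sigma_n$ is realizable by a nonnegative matrix $A$. Since $s_1(\sigma_n)=0$ and $A$ has nonnegative diagonal, $A$ cannot be irreducible (an irreducible nonnegative matrix with spectral radius $1$ attained with multiplicity one would need a strictly positive trace unless $A$ is a single block with the Perron root equal to the trace contribution, which fails here because the Perron root $1$ appears with multiplicity $k\ge 2$ and the block would have trace $0$ yet spectral radius $1$, impossible for an irreducible block of size $\ge 2$). Hence $A$ is reducible, so there is a partition $(\sigma_1,\dots,\sigma_m)$, $m\ge 2$, of $\sigma_n$ into multisets, each of which is itself realizable and therefore satisfies \eqref{specrad} and \eqref{tracecond}.

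The crux — and the step I expect to be the main obstacle — is the counting argument that rules out such a partition. Each part containing a $1$ has spectral radius $1$, so by \eqref{specrad} it contains a $1$; by \eqref{tracecond} its entries sum to a nonnegative number, which forces it to contain at least one more $1$ than it has copies of $(1-2k)/(2k+1)$ after accounting for the small eigenvalue $-1/(2k+1)$. Concretely: a part with $a$ copies of $1$, $b$ copies of $(1-2k)/(2k+1)$, and possibly the singleton $-1/(2k+1)$ has trace $a - b\cdot\tfrac{2k-1}{2k+1} - \tfrac{\varepsilon}{2k+1}$ with $\varepsilon\in\{0,1\}$; nonnegativity gives $a(2k+1)\ge b(2k-1)+\varepsilon$, i.e. roughly $b\le a\cdot\tfrac{2k+1}{2k-1}$, so $b\le a+1$ with equality only if the $-1/(2k+1)$ lies elsewhere, and in fact a single part of size $2$ of the form $\{1,(1-2k)/(2k+1)\}$ already violates \eqref{tracecond} since $1 + \tfrac{1-2k}{2k+1} = \tfrac{2}{2k+1} > 0$ is fine — so I must be more careful: the binding constraint is that $\{1,(1-2k)/(2k+1),(1-2k)/(2k+1)\}$ has trace $1 + 2\cdot\tfrac{1-2k}{2k+1} = \tfrac{3-2k}{2k+1} < 0$ for $k\ge 2$. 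Thus every part can absorb at most one copy of $(1-2k)/(2k+1)$ per copy of $1$ (the lone $-1/(2k+1)$ can be placed with one such part to no additional benefit). With only $k$ copies of $1$ available and $k+1$ copies of $(1-2k)/(2k+1)$ to distribute, the pigeonhole principle forces some part to receive two copies of $(1-2k)/(2k+1)$ against a single $1$ (or a part consisting solely of negative entries), contradicting \eqref{tracecond} in either case. I would organize this as: (i) parts with no $1$ violate \eqref{specrad} unless empty or (for the singleton) violate \eqref{tracecond}; (ii) each part with exactly $a\ge 1$ copies of $1$ admits at most $a$ copies of $(1-2k)/(2k+1)$, with the extra $-1/(2k+1)$ changing nothing; (iii) summing over parts, the total number of $(1-2k)/(2k+1)$'s is at most $k$, contradicting that there are $k+1$ of them.
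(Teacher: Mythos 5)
Your overall strategy---verify $s_1(\sigma_n)=0$ by direct computation, deduce reducibility from the multiplicity of the Perron root, pass to a partition of $\sigma_n$ into realizable parts each satisfying \eqref{specrad} and \eqref{tracecond}, and then count---is exactly the paper's argument; the paper compresses the counting into a pigeonhole step forcing some part to contain $\left\{1,\frac{1-2k}{2k+1},\frac{1-2k}{2k+1}\right\}$, whose sum $\frac{3-2k}{2k+1}$ is negative for $k\ge 2$. The one place your write-up needs tightening is claim (ii): it is \emph{not} true for every $a$ that a part with $a$ copies of $1$ admits at most $a$ copies of $\frac{1-2k}{2k+1}$. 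For $a=k$ the multiset consisting of all $k$ ones and all $k+1$ large negatives has sum $\frac{1}{2k+1}>0$ (indeed $\sigma_n$ itself satisfies \eqref{tracecond}), so (ii) fails there; and your stated justification---the single computation with $a=1$, $b=2$---does not by itself control larger $a$ (e.g.\ for $k=2$, $a=2$, $b=3$ the sum is $2-\tfrac{9}{5}=\tfrac{1}{5}>0$). The repair is already contained in your step (i): a part consisting only of negative entries violates \eqref{specrad}, so every part contains a $1$; since there are at least two parts, each part has $a\le k-1$ ones. Your inequality $a(2k+1)\ge b(2k-1)+\varepsilon$ then gives $b\le a+\frac{2a-\varepsilon}{2k-1}<a+1$, hence $b\le a$ for every part, and summing over parts bounds the total number of copies of $\frac{1-2k}{2k+1}$ by $k$, contradicting that there are $k+1$. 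With the restriction $a\le k-1$ made explicit (and (ii) justified by the displayed inequality rather than the $a=1$ instance), your proof is complete and coincides with the paper's.
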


\begin{proof}
We proceed by contradiction. If $\sigma_n$ is realizable, then the realizing matrix must be reducible. Thus, there is a partition $(\sigma_1,\dots,\sigma_k)$ of $\sigma_n$ such that each $\sigma_i$ satisfies \eqref{specrad} and \eqref{tracecond}. Following the \emph{pigeon-hole principle} (see, e.g., \cite[Theorem 3.1.1]{b2010}), 
\begin{equation*}
\left\{ 1,\frac{1-2k}{2k+1}, \frac{1-2k}{2k+1} \right\} \subseteq \sigma_i,
\end{equation*} but $s_1 (\sigma) \leq 1 +2(1-2k)/(2k+1) = (3-2k)/(2k+1) < 0$, a contradiction.
\end{proof}

Since $\mathcal{P}^n \subset \mathcal{T}^n$, it follows that $\vol{\mathcal{P}^n} \leq \vol{\mathcal{T}^n}$ and it is natural to consider whether this inequality is strict. This can be settled by investigating the following nontrivial problem. 

\begin{prob}
Determine whether $\mathcal{T}^n \backslash \mathcal{P}^n$ contains an open-set.
\end{prob}

\section{A Characterization of Partitionable Trace-Zero Spectra}

Here, we present a necessary and sufficient condition on the realizability of certain trace-zero spectra which generalizes the notions from Theorems \hyperref[oddNRSpectra]{\ref*{oddNRSpectra}} and \hyperref[evenNRSpectra]{\ref*{evenNRSpectra}}. First, we prove the following lemma.

\begin{lemma} 
\label{taylorlemma}
Let $\sigma$ be a realizable spectrum. If $\sigma = \sigma_1 \cup \dots \cup \sigma_k$, where each $\sigma_i$ is realizable, then $s_1(\sigma_i) \leq s_1(\sigma)$.
\end{lemma}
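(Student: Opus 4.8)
The plan is to reduce the statement to the two structural facts already recorded in the paper: that the eigenvalue-sum functional $s_1$ is additive over (multiset) unions, and that realizability forces the trace to be nonnegative via condition \eqref{tracecond}. No spectral theory beyond this is needed.

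First I would observe that, reading $\sigma = \sigma_1 \cup \dots \cup \sigma_k$ as a partition of the multiset $\sigma$, summing all the elements of $\sigma$ can be carried out block by block, so that
\[
s_1(\sigma) = \sum_{j=1}^k s_1(\sigma_j).
\]
Next, since each $\sigma_j$ is assumed realizable, condition \eqref{tracecond} gives $s_1(\sigma_j) \geq 0$ for every $j$. Fixing an index $i$ and isolating the $i$-th summand,
\[
s_1(\sigma) = s_1(\sigma_i) + \sum_{j \neq i} s_1(\sigma_j) \geq s_1(\sigma_i),
\]
because the omitted terms form a sum of nonnegative numbers. As this holds for each $i$, the lemma follows.

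The only point that requires any care — and it is the sole ``obstacle'' — is the bookkeeping of multiplicities: the decomposition must be understood as a genuine partition of the multiset $\sigma$, so that each eigenvalue of $\sigma$, counted with multiplicity, is accounted for exactly once among the $\sigma_j$; this is the natural reading in the intended application, where the $\sigma_j$ are the spectra of the diagonal blocks of a reducible realizing matrix. I would also remark that realizability of $\sigma$ itself is never invoked — only realizability of the pieces — and that the bound is sharp: equality holds for a particular $i$ precisely when $s_1(\sigma_j) = 0$ for all $j \neq i$.
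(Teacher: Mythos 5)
Your proof is correct and follows essentially the same route as the paper's one-line argument: additivity of $s_1$ over the partition together with nonnegativity of each block's trace from \eqref{tracecond}. If anything, your write-up is slightly more precise, since the inequality really rests on $s_1(\sigma_j) \geq 0$ for the omitted blocks (the paper's proof cites $s_1(\sigma) \geq 0$, which is not the fact actually used).
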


\begin{proof}
Clear given that $s_1(\sigma) \geq 0$ and $s_1(\sigma) = \sum_{i=1}^k s_1(\sigma_i)$. 
\end{proof}

This is a rather simple idea, but it proves to be very useful when considering the realizability of trace-zero spectra. Before we state the main result, we introduce a certain type of spectrum.

\begin{definition}
{\rm A normalized spectrum $\sigma$ is called a \emph{Sule{\u{\i}}manova spectrum} if $s_1(\sigma) \geq 0$ and the only positive eigenvalue is 1.}
\end{definition}

\begin{rem}\label{sulrem}
{\rm Friedland \cite{friedland1978inverse} and Perfect \cite{p1953} proved that every Sule{\u{\i}}manova spectrum is realizable via companion matrices (for other proofs, see references in \cite{friedland1978inverse}). Recently, Paparella \cite{p2015} gave a constructive proof via \emph{permutative matrices}.}
\end{rem}

\begin{theorem}
Let $\sigma = \{ \lambda_1, \dots, \lambda_n \} \subset \bb{R}$ and suppose that $\sigma$ satisfies \eqref{specrad}; \eqref{tracecond}; $| \sr{\sigma} \cap \sigma| = k > 1$; and $\lambda < 0$ for every $\lambda \not \in \sr{\sigma} \cap \sigma$. Then $\sigma$ is realizable if and only if it is the union of Sule{\u{\i}}manova spectra.
\end{theorem}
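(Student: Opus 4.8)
The plan is to prove the two implications separately: the reverse implication is a one-line direct-sum argument, while the forward implication will be driven by the Frobenius (irreducible) normal form of a nonnegative realizing matrix, the point being that the structural hypotheses on $\sigma$ are precisely what forces every irreducible diagonal block to be a Sule{\u{\i}}manova spectrum. For the ``if'' direction I would argue as follows: if $\sigma = \sigma_1 \cup \dots \cup \sigma_m$ with each $\sigma_j$ a Sule{\u{\i}}manova spectrum, then by Remark~\ref{sulrem} each $\sigma_j$ is realizable, say $\sigma_j = \operatorname{spec}(A_j)$ with $A_j \geq 0$, so that the block-diagonal matrix $A_1 \oplus \dots \oplus A_m$ is nonnegative with spectrum $\sigma$; hence $\sigma$ is realizable. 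Note this direction uses none of the structural hypotheses on $\sigma$ — they are exactly what is needed for the converse.

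For the ``only if'' direction I would start from a realizing matrix $A \geq 0$ and, scaling if necessary, assume $\sr{\sigma} = 1$, so that $\sigma$ consists of $k \geq 2$ copies of $1$ together with finitely many strictly negative numbers in $[-1,0)$. Since $1$ occurs in $\sigma$ with multiplicity exceeding $1$, the Perron--Frobenius theorem rules out $A$ being irreducible, so I would pass to its Frobenius normal form: up to permutation similarity $A$ is block upper triangular with irreducible diagonal blocks $B_1,\dots,B_r$ and $\sigma = \bigcup_{i=1}^{r} \operatorname{spec}(B_i)$ as a multiset. The crux is to check that each $\operatorname{spec}(B_i)$ is a Sule{\u{\i}}manova spectrum. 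For this: $B_i$ is irreducible and nonnegative, hence it has a \emph{simple} Perron root $\sr{B_i} > 0$ (the degenerate case $B_i = [\,0\,]$ cannot occur since $0 \notin \sigma$), and $\sr{B_i} \in \operatorname{spec}(B_i) \subseteq \sigma$; because $1$ is the only positive number in $\sigma$, this forces $\sr{B_i} = 1$, and simplicity then prevents any other eigenvalue of $B_i$ from equalling $1$, so every remaining eigenvalue of $B_i$ lies in $\sigma \setminus \{1\}$ and is therefore negative. Finally $\trace{B_i} \geq 0$, since $B_i$ is permutation-equivalent to a principal submatrix of $A$ and $A$ has nonnegative diagonal; equivalently $s_1(\operatorname{spec}(B_i)) \geq 0$. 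Thus $\operatorname{spec}(B_i)$ is normalized, has $1$ as its only positive element, and has nonnegative sum — a Sule{\u{\i}}manova spectrum — and $\sigma = \bigcup_{i} \operatorname{spec}(B_i)$ exhibits $\sigma$ as a union of such spectra. (Counting Perron roots gives $r = k$, though this is not needed.)

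The computations here are routine; the one genuine step is recognizing the Frobenius normal form as the right device and seeing that the two hypotheses — that $\sr{\sigma}$ has multiplicity $k > 1$ and that all remaining eigenvalues are negative — together guarantee that no irreducible diagonal block can carry two Perron-sized eigenvalues, which is exactly what would otherwise prevent a block spectrum from being Sule{\u{\i}}manova. I would also note that Lemma~\ref{taylorlemma} gives an alternative handle on the block traces (it yields $s_1(\operatorname{spec}(B_i)) \leq s_1(\sigma)$, which combined with nonnegativity pins them down precisely when $\sigma$ is trace-zero), although the principal-submatrix observation already suffices in general.
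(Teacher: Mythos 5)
Your proof is correct and follows essentially the same route as the paper's: the paper's (much terser) argument likewise passes from reducibility of the realizing matrix to a partition of $\sigma$ into realizable pieces, each containing a single $1$ and otherwise negative entries with nonnegative sum, which is exactly what your Frobenius-normal-form analysis spells out in detail. Your converse, realizing the union by a direct sum of realizing matrices for the Sule{\u{\i}}manova pieces, is also the paper's argument (via its Remark on realizability of Sule{\u{\i}}manova spectra).
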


\begin{proof}
We can assume that $\sigma = \{1, \dots, 1, \lambda_{k+1}, \dots, \lambda_n \}$, where $\lambda_i < 0$ for $i \in \{k+1,\dots,n\}$. Thus, there exists a partition $(\sigma_1,\dots,\sigma_k)$ of $\sigma$ such that each $\sigma_i$ is realizable. Lemma \hyperref[taylorlemma]{\ref*{taylorlemma}} implies that $s_1(\sigma_i) \geq 0$. Because $\lambda_i < 0$, if $\lambda_i \neq 1$, each $\sigma_i$ is a Sule{\u{\i}}manova spectrum. 

The converse follows from Remark \hyperref[sulrem]{\ref*{sulrem}}.
\end{proof}

\section{Conclusion}

We conclude by introducing a generalization of our original problem. The \emph{nonnegative inverse eigenvalue problem} is to determine necessary and sufficient conditions such that $\sigma = \{ \lambda_1, \dots, \lambda_n\} \subset \bb{C}$ is the spectrum of a nonnegative matrix. In addition to satisfying \eqref{specrad} and \eqref{tracecond}, $\sigma$ must be \emph{self-conjugate}, i.e., $\bar{\lambda} \in \sigma$ if $\lambda \in \sigma$.

Without loss of generality, we may write 
\begin{equation*}
\sigma = \{ 1, \lambda_1, \dots, \lambda_r, \mu_1 \pm \nu_1 \ii,\dots, \mu_c \pm \nu_c \ii \},  
\end{equation*}
where
\begin{enumerate}[label=(\roman*)]
\item $\Im({\lambda_i}) = 0$, for all $i \in \{1,\dots,r\}$; 
\item $|\lambda_i| \leq 1$, for all $i \in \{1,\dots,r\}$; 
\item $\nu_i \neq 0$, for all $i \in \{1,\dots,c\}$; and
\item $|\mu_i + \nu \ii | = \sqrt{\mu_i^2 + \nu_i^2} \leq 1$ for all $i \in \{1,\dots,c\}$.
\end{enumerate}
With the above in mind, \eqref{tracecond} can be written as 
\begin{equation*}
1 + \sum_{i=1}^r \lambda_i + 2 \sum_{i=1}^c \mu_i \geq 0.
\end{equation*}

Let $\mathcal{B}_\infty^n := \{ x \in \mathbb{R}^n : \begin{Vmatrix} x \end{Vmatrix}_\infty \leq 1 \}$ and $\mathcal{B}_2^n := \{ x \in \mathbb{R}^n : \begin{Vmatrix} x \end{Vmatrix}_2 \leq 1 \}$. We identify $a + b \ii$ with $(a,b) \in \mathbb{R}^2$. \newline

\begin{prob} 
{\rm For $n \geq 1$, find the volume of the \emph{trace-nonnegative region}
\begin{equation*}
\mathcal{TN}_{n} := 
\left\{ 
(\lambda_1,\dots, \lambda_r) \times (\mu_1,\nu_1,\dots, \mu_c,\nu_c) \in \mathcal{B}_\infty^r \times \mathcal{B}_2^{2c} : 
1 + \sum_{i=1}^r \lambda_i + 2 \sum_{i=1}^c \mu_i \geq 0 
\right\}
\end{equation*}}
\end{prob}

\begin{rem}
Corollary \hyperref[volumecor]{\ref*{volumecor}} solves Problem 1 when $c = 0$.
\end{rem}

\bibliographystyle{abbrv}
\bibliography{refs}

\end{document}